\theoremstyle{remark}{
\newtheorem{Def}{{\rm Definition}}

\newtheorem{Rem}{{\rm Remark}}

}
\theoremstyle{plain}
{

\newtheorem{Prop}{Proposition}
\newtheorem{Thm}{Theorem}

}
\begin{document}
\title[Almost Morse functions of a certain class with natural construction]{Smooth functions which are Morse on preimages of values not being local extrema and constructing natural functions of the class on connected sums of manifolds admitting these functions}
\author{Naoki kitazawa}
\keywords{(Singularity theory of) smooth functions and maps. Morse(-Bott) functions. Reeb (di)graphs. \\
\indent {\it \textup{2020} Mathematics Subject Classification}: 57R45, 58C05.}

\address{Osaka Central Advanced Mathematical Institute (OCAMI) \\
3-3-138 Sugimoto, Sumiyoshi-ku Osaka 558-8585
TEL: +81-6-6605-3103
}
\email{naokikitazawa.formath@gmail.com}
\urladdr{https://naokikitazawa.github.io/NaokiKitazawa.html}
\maketitle
\begin{abstract}
We discuss smooth functions which are Morse on preimages of values not being local extrema. We call such a function {\it internally Morse} or {\it I-Morse}. 

The {\it Reeb graph} of a smooth function is the space of all connected components of preimages of single points of it topologized with the natural quotient topology of the manifold and a vertex of it is a point corresponding to a preimage with critical points. A smooth function is {\it neat with respect to the Reeb graph} or {\it N-Reeb} if the preimages of the vertices are the closed subsets in the manifolds of the domains with interiors being empty.

We discuss I-Morse and N-Reeb functions, {\it IN-Morse-Reeb} functions. Our main result presents an IN-Morse-Reeb function respecting two such functions, on a connected sum of these given manifolds.

\end{abstract}
\section{Introduction.}
\label{sec:1}
{\it Morse}({\rm-Bott}) functions have been fundamental and strong tools in geometry of manifolds. From the viewpoint of singularity theory, they are also important mathematical (geometric) objects to study. 
For a function of a certain nice class, its {\it Reeb graph} can be defined. 
In short if the image of the set of all critical points of a smooth function on a closed manifold is discrete, then we can define its Reeb graph: see \cite[Theorem 3.1]{saeki4} for a general case and for more explicit case, see \cite{izar} for example. 
Reeb graphs of smooth functions are also classical strong tools in theory of Morse functions (\cite{reeb}).
These combinatorial objects are fundamental tools to obtain important information such as the 1st Betti numbers and the ranks of the fundamental groups. They are also important in visualizing Morse(-Bott) functions or certain generalized functions. We can also orient them naturally and have the {\it Reeb digraph} of the function.

We do not explain the definition of a Morse(-Bott) function rigorously. See \cite{milnor} for Morse functions and the well-known and fundamental natural correspondence between a so-called {\it $k$-handle}, a cornered smooth manifold represented as the product $D^k \times D^{m-k}$ of the $k$-dimensional unit disk $D^k$ in the $k$-dimensional Euclidean space ${\mathbb{R}}^k$ and the ($m-k$)-dimensional one $D^{m-k} \subset {\mathbb{R}}^{m-k}$ and a critical point of {\it index} $k$. For Morse-Bott functions, see \cite{bott} for example, natural generalizations of Morse functions. For systematic exposition on related singularity theory of differentiable maps, see \cite{golubitskyguillemin}.
 
In our paper, a smooth function is {\it internally Morse} or {\it I-Morse} if it is Morse at preimages of values not being local extrema. A smooth function is {\it neat with respect to the Reeb graph} if the Reeb graph is defined in such a way that the interior of the preimage of each preimage corresponding to each vertex is empty considered in the manifold of the domain. Such functions are explicitly discussed in \cite{lerariomeronizuddas}. Articles such as \cite{kitazawa5, kitazawa6} of the author are also closely related: construction of functions of an explicit subclass of this class is discussed. The preprint \cite{kitazawa9} of the author is also closely related.

We discuss I-Morse and N-Reeb functions: {\it IN-Morse-Reeb} functions or {\it IN-M-R} functions. 

Hereafter, a connected sum of two manifold is considered in the smooth category.

Our new result is as follows.
\begin{Thm}
\label{thm:1}
For two IN-M-R functions on closed and connected manifolds of dimension $m>1$, we have one on any connected sum of the two manifolds whose Reeb digraph is isomorphic to an arbitrary digraph $G_{\rm R}$ obtained in the following way.

Let $G_{{\rm R},1}$ and $G_{{\rm R},2}$ denote the Reeb digraphs of these two functions, respectively. From each Reeb digraph, we choose a point except a vertex corresponding to some local extremum. We identify these two points, define these points as the only one newly added vertex and have $G_{\rm R}$.  

\end{Thm}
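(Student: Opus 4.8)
The plan is to realize $G_{\rm R}$ by cutting each $M_i$ open along the level‑set component corresponding to $p_i$, splicing in a small ``neck'' cobordism between the two pieces, and tuning the function on the neck so that it produces exactly one new vertex of the Reeb digraph. First I would normalize the local data: composing $f_i$ with an orientation‑preserving diffeomorphism of $\mathbb{R}$ (which changes neither the IN‑M‑R property nor the isomorphism type of the Reeb digraph) we may assume $p_i$ lies at the value $0$. Treat first the main case where $p_i$ is an interior point of an edge, so the component $C_i$ of ${f_i}^{-1}(0)$ it corresponds to is a closed connected $(m-1)$‑dimensional submanifold of $M_i$ with a tubular neighborhood identified with $C_i\times(-1,1)$ on which $f_i$ is the second projection. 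Put $\hat M_i:=M_i\setminus\big(C_i\times(-\tfrac12,\tfrac12)\big)$, a compact manifold with $\partial\hat M_i=C_i\times\{-\tfrac12\}\sqcup C_i\times\{\tfrac12\}$; then $f_i|_{\hat M_i}$ is again I‑Morse and N‑Reeb, and its Reeb digraph is $G_{{\rm R},i}$ with the edge through $p_i$ severed at $p_i$ into two half‑edges terminating at univalent ``boundary'' vertices at the values $\mp\tfrac12$.

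Next I would construct the neck: a compact $m$‑dimensional cobordism $W$ from $C_1\sqcup C_2$ to $C_1\sqcup C_2$ together with a smooth $f_W\colon W\to[-\tfrac12,\tfrac12]$ equal to the collar coordinate near $\partial W$, having exactly two critical points, of indices $1$ and $m-1$ (two of index $1$ when $m=2$), both lying on the single level set ${f_W}^{-1}(0)$, with ${f_W}^{-1}(0)$ connected. Concretely: from the trivial cobordism $(C_1\sqcup C_2)\times[-\tfrac12,\tfrac12]$ attach a $1$‑handle joining the $C_1$‑ and $C_2$‑factors (so that the level set passes through $C_1\# C_2$), then an $(m-1)$‑handle along the connected‑sum sphere $S^{m-2}\subset C_1\# C_2$ (so that it returns to $C_1\sqcup C_2$), and finally rearrange the two critical points to the common value $0$ — possible after a generic choice of gradient‑like vector field, since then there is no flow line from the index‑$1$ to the index‑$(m-1)$ critical point. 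One checks that $f_W$ is I‑Morse (the value $0$ is not a local extremum, as $1,m-1\notin\{0,m\}$ because $m>1$, and the two critical points on ${f_W}^{-1}(0)$ are nondegenerate), that it is N‑Reeb (${f_W}^{-1}(0)$ is $(m-1)$‑dimensional, hence has empty interior), and that its Reeb digraph is the oriented ``plus'' graph: a single $4$‑valent vertex at the value $0$ with two incoming and two outgoing half‑edges — this is forced because ${f_W}^{-1}(0)$ is connected, contains the critical points, and has exactly two components of nearby level sets below it and two above it.

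Then I would assemble. Glue $W$ to $\hat M_1\sqcup\hat M_2$ by matching $C_i\times\{\mp\tfrac12\}\subset\partial\hat M_i$ with the $C_i$‑factor of the bottom, resp.\ top, of $\partial W$; since all three functions are collar projections near the gluing loci, they patch to a smooth function $f$ on the resulting closed manifold $\tilde M$, which is I‑Morse and N‑Reeb because each of $f_1|_{\hat M_1}$, $f_2|_{\hat M_2}$, $f_W$ is. Its Reeb digraph is obtained by gluing the Reeb digraph of $f_i|_{\hat M_i}$ to the ``plus'' digraph of $f_W$ along the two pairs of matching half‑edges, and the net effect is precisely to identify $p_1$ with $p_2$ as one new $4$‑valent vertex with the orientations consistent — i.e.\ the Reeb digraph of $(\tilde M,f)$ is $G_{\rm R}$. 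It remains to see $\tilde M$ is a connected sum $M_1\# M_2$: gluing back the product part of $W$ reconstitutes $M_1\sqcup M_2$ (minus small regular neighborhoods occupied by the handles), the $1$‑handle of $W$ realizes the connected sum, and the $(m-1)$‑handle — attached along the belt sphere of the $1$‑handle, i.e.\ the connected‑sum sphere — does not alter the diffeomorphism type of the total space; the data of the $1$‑handle can moreover be chosen to produce any prescribed connected sum. When instead some $p_i$ is a non‑extremal vertex, or more generally for an arbitrary $G_{\rm R}$, the same argument applies with $C_i$ replaced by the corresponding singular fiber and $W$ enlarged to have the appropriate numbers of boundary components and a single central vertex of the prescribed valence; the (notationally heavier) details are analogous.

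The step I expect to be the main obstacle is the construction of $W$ together with the verification $\tilde M\cong M_1\# M_2$. The naive attempt — glue $M_1\setminus\mathring D^m$ to $M_2\setminus\mathring D^m$ along a sphere and smooth $f$ near the neck by a product modification of the form $\chi(\cdot)\,u_m$ — necessarily yields two critical points at opposite values, hence two trivalent vertices and a spurious extra edge rather than one $4$‑valent vertex, so one genuinely needs the handle rearrangement that forces the two critical points onto a common level while keeping ${f_W}^{-1}(0)$ connected; and one must then confirm, by handle calculus, that threading $\hat M_1$ and $\hat M_2$ through this neck yields exactly the connected sum (for instance, in the model case $M_1=M_2=S^m$ one sees directly that the neck, after capping with the disks of $\hat M_i$, collapses to $S^{m-1}\times[0,1]$ and $\tilde M=S^m$).
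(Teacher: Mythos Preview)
Your approach differs from the paper's. The paper removes from each $M_i$ a thin tube $B_i\cong D^{m-1}\times[a,b]$ lying over a short segment through $p_i$ (found via Proposition~1/Remark~1) and glues $M_1\setminus\mathrm{int}\,B_1$ to $M_2\setminus\mathrm{int}\,B_2$ along the resulting boundary $\partial B_i\cong S^{m-1}$; this is the connected sum on the nose, so $\tilde M\cong M_1\# M_2$ is immediate. The price is that the glued function initially has the two edge-segments \emph{merged} into a single edge, and a separate local fold-map deformation (Figures~2--3) is then used to introduce the desired vertex. You go the opposite way: engineer the neck $W$ so the Reeb digraph is correct from the outset, and then argue $\tilde M\cong M_1\# M_2$. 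The paper's route is more elementary, and it also treats the case where $p_i$ is itself a non-extremal vertex uniformly (Proposition~1 already allows a critical value $s\in(a,b)$), whereas you defer that case to an analogy with singular fibers that would need real work.

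There is a concrete error in your rearrangement step. You claim that for a generic gradient-like vector field there is no flow line between the two critical points, but for $m\ge 3$ this is false: the ascending sphere of the index-$1$ point and the descending sphere of the index-$(m-1)$ point are both $(m-2)$-spheres in the $(m-1)$-dimensional level $C_1\# C_2$, and you chose them to \emph{coincide}; after a generic perturbation their transverse intersection has dimension $(m-2)+(m-2)-(m-1)=m-3\ge 0$, so trajectories persist. The fix is not genericity but a specific isotopy: push the $(m-1)$-handle's attaching sphere to a parallel copy inside the product collar $S^{m-2}\times(-\epsilon,\epsilon)$ of the connected-sum sphere, making the two spheres disjoint; then the rearrangement goes through. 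With that correction your construction does yield $M_1\# M_2$ --- a clean way to see it (sharper than your ``the $(m-1)$-handle does not alter the diffeomorphism type'', which is not justified since the two handles are not a cancelling pair) is that the cocore of the $1$-handle and the core of the $(m-1)$-handle glue along $S^{m-2}$ to an embedded $S^{m-1}\subset W$ exhibiting $W\cong (C_1\times I)\#(C_2\times I)$, whence $\tilde M=\hat M_1\cup W\cup\hat M_2\cong M_1\# M_2$.
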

For a graph $G$ and its vertex $v$, let ${\rm deg}_G(v)$ denote the degree of $v$ in $G$.
\begin{Def}
\label{def:1}
Let $G$ be a finite and connected graph admitting a piecewise smooth function which is smooth and injective on each edge and which has a local extremum only at a vertex of degree $1$. We regard this as a digraph naturally. Suppose that a closed and connected manifold $M$ of dimension $m>1$ admits an IN-M-R function as follows. We have the following. 

\begin{itemize}
\item The function has local extrema only at preimages corresponding to vertices of degree $1$.
\item The Reeb digraph can be embedded into the digraph $G$ topologically in such a way that the set of all vertices of degree $1$ are embedded into the set of all vertices of degree $1$ of $G$, that each vertex of degree $2$ is mapped to the interior of some edge of $G$, and that the orientation induced from $G$ is same as the orientation of the given orientation of the given Reeb digraph.
\end{itemize}
We call such a function a {\it $G$-IN-M-R} function.
If in addition, the number of critical points mapped by the natural quotient map to each vertex $v$ of degree at least $3$ of the Reeb digraph $G_{\rm R}$ of the $G$-IN-M-R function is ${\rm deg}_{G_{\rm R}}(v)-2$ for the IN-M-R function on $M$, 
and that of critical points mapped by the natural quotient map to each vertex of degree $2$ of the Reeb digraph $G_{\rm R}$ of it is $1$, 
then
we call such a function {\it $G$-simple} {\rm (}{\it $G$-S}{\rm )} function. 
\end{Def}
\begin{Thm}
\label{thm:2}
In Theorem \ref{thm:1}, if two given maps are $G_{{\rm R},i}$-S functions, then the resulting function can be also obtained as a $G_{\rm R}$-S function.  
\end{Thm}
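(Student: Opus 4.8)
The plan is to rerun the construction behind Theorem~\ref{thm:1} while keeping a precise census of critical points, and to observe that, when the two inputs are $G_{{\rm R},i}$-S, the census forces the output to be $G_{\rm R}$-S. Recall that the construction of Theorem~\ref{thm:1} alters each $M_i$ only near the chosen point $p_i$: it deletes a small $m$-ball around a regular point lying over $p_i$ and glues in a connecting piece whose Reeb digraph is the single new vertex $v$, realizing the identification $p_1\sim p_2=v$. Hence the critical set of the resulting function $g$ on $M_1\#M_2$ is the disjoint union of (a) the critical points of $f_1$ and $f_2$ lying outside the surgered regions, which survive unchanged and lie over the vertices of $G_{{\rm R},1},G_{{\rm R},2}$ other than $p_1,p_2$, and (b) finitely many critical points lying in the connecting piece, all of which lie over $v$.

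First I would dispose of the old vertices. A vertex $w\neq v$ of $G_{\rm R}$ is a vertex of some $G_{{\rm R},i}$ distinct from $p_i$, with $\deg_{G_{\rm R}}(w)=\deg_{G_{{\rm R},i}}(w)$, and the critical points of $g$ over $w$ are exactly those of $f_i$ over $w$. Since $f_i$ is $G_{{\rm R},i}$-S, their number is $\deg_{G_{\rm R}}(w)-2$ if $\deg_{G_{\rm R}}(w)\ge 3$ and $1$ if $\deg_{G_{\rm R}}(w)=2$. So the $G_{\rm R}$-S condition holds at every old vertex with no further argument.

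The substance is the count at $v$. Write $\ell_i$ for the local degree of $p_i$ in $G_{{\rm R},i}$ (so $\ell_i=2$ when $p_i$ is interior to an edge and $\ell_i=\deg_{G_{{\rm R},i}}(p_i)$ when $p_i$ is a vertex), so that $\deg_{G_{\rm R}}(v)=\ell_1+\ell_2\ge 4$ and the target is $\ell_1+\ell_2-2$ critical points over $v$. Among these, the construction already carries over $c_i$ critical points of $f_i$ sitting over $p_i$, where by the $G_{{\rm R},i}$-S hypothesis $c_i=0$ if $p_i$ is an interior edge point, $c_i=\ell_i-2$ if $p_i$ is a vertex of degree $\ge 3$, and $c_i=1$ if $p_i$ is a vertex of degree $2$. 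So it suffices to produce a connecting piece, with boundary behaviour dictated by $f_1,f_2$, carrying exactly $N:=\ell_1+\ell_2-2-(c_1+c_2)$ further critical points, i.e.\ $N=2$ when neither $p_i$ is a degree-$2$ vertex, $N=1$ when exactly one is, and $N=0$ when both are. I would realize this by an explicit local model: over a neighbourhood of $v$ put a function whose level sets, swept upward past $y^{\ast}:=f(v)$, merge the incoming level components down to one component and then split that one back into the outgoing components; any extra merges and splits needed when $\ell_1+\ell_2>4$ are supplied by the inherited critical points in the surgered chunks, while the two genuinely new critical points — an index-$1$ ``join the two sides below'' point and an index-$(m-1)$ ``rejoin the two sides above'' point — account for $N=2$ in the generic case, and in the degree-$2$ cases one threads the connected-sum neck through an already-present non–component-changing inherited critical point so that the two sides get joined with only one (or, when both $p_i$ are degree-$2$ vertices, zero) new critical points.

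The hard part will be precisely this last construction: showing that the minimal value $N$ is attained even though the connecting piece $\big(C_1\setminus\operatorname{int}D^m\big)\cup_{S^{m-1}}\big(C_2\setminus\operatorname{int}D^m\big)$ (with $C_i$ a collar of the level component over $p_i$) and the boundary conditions are forced on us. The degree-$2$-vertex cases are the delicate ones, since there the budget is tight and one must argue that the inherited interior-index critical point can be absorbed into, rather than added on top of, the neck — which I expect to need a careful choice of the deleted ball and of the smoothing of $g$ across the gluing sphere; when no degree-$2$ vertices occur (e.g.\ $m=2$, where every non-extremal critical point changes the component count of its level set) this difficulty is vacuous and $N=2$ always. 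Granting that, the remaining verifications go exactly as in Theorem~\ref{thm:1}: $g$ is smooth, Morse on the preimage of every non-extremal value and N-Reeb, has its local extrema over the degree-$1$ vertices, and has oriented Reeb digraph isomorphic to $G_{\rm R}$ — the orientations matching at $v$ precisely because $p_1,p_2$ were taken away from local extrema, so that the $\ell_1+\ell_2$ half-edges at $v$ carry a coherent incoming/outgoing labelling.
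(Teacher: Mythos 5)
Your overall route is the same as the paper's: rerun the construction of Theorem \ref{thm:1} and audit the number of critical points lying over each vertex of $G_{\rm R}$. Your bookkeeping at the old vertices, and at the new vertex $v$ in the case where neither chosen point is a degree-$2$ vertex of its Reeb digraph, is correct and considerably more explicit than the paper's one-line justification: the two new critical points of index $1$ and $m-1$ produced by the local deformation (FIGUREs \ref{fig:2} and \ref{fig:3}), together with the $c_1+c_2$ inherited ones, give exactly $(\ell_1-2)+(\ell_2-2)+2=\ell_1+\ell_2-2={\rm deg}_{G_{\rm R}}(v)-2$, as required.

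The genuine gap is precisely the case you yourself flag as ``the hard part'': a chosen point $p_i$ that is a degree-$2$ vertex of $G_{{\rm R},i}$, a choice Theorem \ref{thm:1} permits since such a vertex is not a local extremum, and a configuration the $G_{{\rm R},i}$-S hypothesis explicitly allows (one critical point over each degree-$2$ vertex). Your proposed fix --- absorbing that inherited critical point into the neck so that only $N=1$ or $N=0$ new critical points appear --- does not obviously work. The inherited critical point over a degree-$2$ vertex effects an index-$k$ surgery on its level component without changing the number of components, and the resulting topology change is dictated by $f_i$; it cannot be traded for the component-merging role of the neck. Indeed, if only one merge and one split occurred over $v$, the incoming components $F_1^-, F_2^-$ and outgoing components $F_1^+, F_2^+$ would have to satisfy $F_1^+ \# F_2^+ \cong F_1^- \# F_2^-$, which fails in general when $F_1^+$ is a nontrivial surgery on $F_1^-$. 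Hence either the inherited critical point sits over $v$, giving ${\rm deg}_{G_{\rm R}}(v)-1$ critical points there and violating the S-condition, or it sits at a nearby level and creates an extra degree-$2$ vertex absent from $G_{\rm R}$. To close the argument you must either restrict the chosen points to interior points of edges and to vertices of degree at least $3$ (where your count already suffices), or supply a genuinely new local model for the degree-$2$ case; as written the proposal does not prove the statement in the stated generality. (For what it is worth, the paper's own proof --- ``by our definitions and construction'' --- does not address this case either, so you have isolated a point the paper glosses over rather than missed an argument it contains.)
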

\begin{Thm}
\label{thm:3}
Let $G$ be a finite and connected graph with no vertex of degree $2$ of 1st Betti number $0$ admitting a piecewise smooth function which is smooth and injective on each edge and which has a local extremum only at a vertex of degree $1$. We regard this as a digraph naturally.
We have the following.
\begin{enumerate}
\item \label{thm:3.1} Suppose that a closed and connected manifold $M$ of dimension $m>2$ admits a $G$-IN-R-M function.
Let ${W_f}_{\geq j}$ be the set of all vertices of the Reeb digraph $W_f$ of $f$ whose degrees are at least $j$.
Here, $M$ also admits an IN-M-R function whose Reeb digraph is isomorphic to a graph obtained by adding exactly ${\Sigma}_{v \in {W_f}_{\geq 3}} ({\rm deg}_{{W_f}_{\geq 3}}(v)-2)+{\Sigma}_{v \in {W_f}_{\geq 2}-{W_f}_{\geq 3}} 1$ new vertices of degree $2$ to $G$.
\item \label{thm:3.2} Suppose that there exist two $G$-IN-M-R functions and that their Reeb digraphs can be embedded into $G$
disjointly as in Definition \ref{def:1}. Let $M_1$ and $M_2$ denote the manifolds of the domains of these two given functions, respectively, and suppose that the dimensions are both $m>2$. Here, a manifold $M$ represented as a connected sum of $M_1$ and $M_2$ also admits an IN-M-R function whose Reeb digraph is isomorphic to a graph obtained by adding exactly ${\Sigma}_{v \in {W_f}_{\geq 3}} ({\rm deg}_{{W_f}_{\geq 3}}(v)-2)+{\Sigma}_{v \in {W_f}_{\geq 2}-{W_f}_{\geq 3}} 1$ new vertices of degree $2$ to $G$.
\end{enumerate}
\end{Thm}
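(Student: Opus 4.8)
The plan is to obtain both parts by combining Theorem \ref{thm:1} with a short list of local modifications of IN-M-R functions that do not change the diffeomorphism type of the domain; the hypothesis $m>2$ enters because some of these modifications create new singular fibers corresponding to vertices of degree $2$, and such fibers exist only when they have dimension at least $2$. The two moves I will use are \emph{branch growth} --- a birth, over a regular value, of a pair formed by a new local extremum and an adjacent critical point, which attaches one new edge and one new vertex of degree $1$ to the Reeb digraph --- and \emph{fiber normalization} --- a perturbation supported near the preimage of a single vertex of the Reeb digraph that replaces that preimage by a chain of standard singular fibers (branchings of degree $3$ together with single-critical-point fibers sitting over new vertices of degree $2$), leaving the domain, the rest of the Reeb digraph, the I-Morse and N-Reeb properties and the orientation unchanged. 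Since $W_f$ and $G$ are trees, every Reeb digraph occurring in the argument is again a subdivision of a subtree of $G$, so no $1$-cycle is ever produced.

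For \eqref{thm:3.1}, fix the given $G$-IN-M-R function $f$ on $M$ together with the topological embedding $W_f\hookrightarrow G$ of Definition \ref{def:1}; its image is a subtree $T\subseteq G$ whose leaves are leaves of $G$. First apply branch growth repeatedly to enlarge the Reeb digraph until its image fills all of $G$, attaching each missing edge of $G$ at a vertex of $T$ where $G$ has unused valence, and then apply fiber normalization at every vertex of the resulting Reeb digraph. The outcome is an IN-M-R function on $M$ whose Reeb digraph is homeomorphic to $G$; recording how many critical points lie over each vertex of $W_f$ and counting the vertices of degree $2$ created by the two moves shows that this Reeb digraph is $G$ with exactly $\Sigma_{v \in {W_f}_{\geq 3}}({\rm deg}_{{W_f}_{\geq 3}}(v)-2)+\Sigma_{v \in {W_f}_{\geq 2}-{W_f}_{\geq 3}} 1$ vertices of degree $2$ inserted, oriented as prescribed.

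For \eqref{thm:3.2}, apply Theorem \ref{thm:1} to $f_1$ and $f_2$, identifying a non-extremal point of $W_{f_1}$ with one of $W_{f_2}$; this produces an IN-M-R function $f$ on a connected sum $M$ of $M_1$ and $M_2$ whose Reeb digraph is the one-point union of $W_{f_1}$ and $W_{f_2}$. The two disjoint embedded copies of $W_{f_1}$ and $W_{f_2}$ in $G$ are joined in $G$ by a unique path, which meets each copy at a vertex of $G$ carrying unused valence; splitting the wedge vertex by a fiber normalization and then using branch growth to lay a strand of the Reeb digraph along that path into the unused valences turns $f$ into a $G$-IN-M-R function on $M$. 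Applying \eqref{thm:3.1} to this function now gives an IN-M-R function on $M$ with the asserted Reeb digraph, the same bookkeeping producing the stated number of added vertices of degree $2$.

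The crux is fiber normalization: one must produce, for a vertex of arbitrary degree of the Reeb digraph of an IN-M-R function with $m>2$, an explicit perturbation supported near the corresponding preimage that realizes it on the same manifold as the prescribed chain of standard singular fibers, and then check that the function remains Morse on preimages of non-extremal values, that the preimages of the vertices stay nowhere dense, that no $1$-cycle appears, and that the number of new vertices of degree $2$ is exactly the one stated. A further point requiring care, specific to \eqref{thm:3.2}, is to verify that the wedge vertex furnished by Theorem \ref{thm:1} can always be resolved, and a connecting strand grown, so that the result embeds into $G$ as in Definition \ref{def:1}, using only the hypothesis that the two original Reeb digraphs embed into $G$ disjointly.
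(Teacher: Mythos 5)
Your plan rests on two moves, and the second one --- ``fiber normalization'' --- is exactly the content of the theorem, not a tool you may assume. You yourself flag it as ``the crux'' that ``one must produce'': a perturbation, supported near the preimage of an arbitrary vertex of the Reeb digraph of an I-Morse function, replacing that singular fiber by a prescribed chain of standard singular fibers without changing the manifold. Nothing in your write-up constructs this, and it is not a standard Morse-theoretic move (the fiber over a vertex of an IN-M-R function can contain many critical points of arbitrary indices at one level, and is in general not a manifold). The paper's entire machinery --- Proposition \ref{prop:2}, which uses $m>2$ to find disjoint product $D^{m-1}$-bundles over pairs of edges and assemble them into a disk $B_{f,G}\subset M$ collapsing onto the refined digraph $W_{f,G}$, followed by excising this disk and the corresponding disk in a model $G$-S function on $S^m$ (built via \cite{gelbukh1, gelbukh2, michalak1}) and gluing the two functions value-preservingly --- exists precisely to accomplish what you call normalization. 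So your route is genuinely different in outline (purely local surgeries on $f$ versus a global connected-sum with a model function on the sphere spread over the whole tree), but as written it postulates the hard step rather than proving it.

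Two further points would need repair even granting the move. First, the count: you assert that branch growth plus normalization at every vertex yields exactly ${\Sigma}_{v \in {W_f}_{\geq 3}} ({\rm deg}_{{W_f}_{\geq 3}}(v)-2)+{\Sigma}_{v \in {W_f}_{\geq 2}-{W_f}_{\geq 3}} 1$ new degree-$2$ vertices, but this formula is asymmetric --- it charges $\deg(v)-2$ new vertices to each high-degree vertex of $W_f$ and nothing to vertices of $G$ outside the image of $W_f$ --- whereas ``normalization at every vertex'' applied uniformly would not produce that asymmetry; in the paper the extra degree-$2$ vertices arise specifically as the critical points of $f|_{\partial B_{f,G}}$ ($N_p-1$ and $N_q-1$ per vertex of $W_f$, as in Proposition \ref{prop:2} (\ref{prop:2.2.2})), which is why only vertices of $W_f$ contribute. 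Second, in part (\ref{thm:3.2}) your order of operations is incoherent: after applying Theorem \ref{thm:1} the two chosen points are \emph{identified}, so there is no longer a pair of points to join by ``a strand laid along the unique path in $G$,'' and the one-point union of $W_{f_1}$ and $W_{f_2}$ at arbitrary non-extremal points need not embed into $G$ at all. You would have to arrange the connection through $G$ \emph{instead of} the wedge, not after it.
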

They also give a kind of new results on the following problem. For a class ${\mathcal{M}}_{\rm s}$ of smooth maps, consider two maps of the class on some closed and connected manifolds. On a connected sum, can we construct a map of the class respecting topological properties and combinatorial ones of the given maps? 

This can be discussed easily in the class of {\it fold} maps, higher dimensional versions of Morse functions. See \cite{golubitskyguillemin} for related fundamental singularity theory and see also a pioneering explicit differential topological study \cite{saeki1}.

A fold map is a smooth map locally represented as the product map of a Morse function and the identity map on some smooth manifold. We can define the index of a singular point of a fold map uniquely. We need to understand the notion of fold map and it is sufficient to know this, in understanding some arguments of our proof of Theorems \ref{thm:1}, \ref{thm:2} and \ref{thm:3} with FIGUREs \ref{fig:1}, \ref{fig:2}, \ref{fig:3}, \ref{fig:4}, and \ref{fig:5}. 

We go back to the problem.  {\it Special generic} maps are generalizations of the canonical projections of the unit spheres $S^k \subset {\mathbb{R}}^{k+1}$ in the sense that singular points are higher dimensional versions of critical points of index $0$ for Morse functions and form a subclass of the class of fold maps: \cite{burletderham, furuyaporto, saeki2} are related pioneering studies.

This is also shown in some specific cases for {\it round} fold maps, the images of the set of all singular points of which are concentric spheres and which have been introduced first by the author as another subclass of fold maps generalizing the canonical projections of the unit spheres: see \cite{kitazawa1, kitazawa2, kitazawa3, kitazawa4}. 

The next section proves our new result.

\section{On our main result.}

We shortly present fundamental methods and theory.

We need to apply the well-known natural correspondence of $k$-handles and critical points whose indices are $k$ in several scenes for $k \geq 1$.

Let $\mathbb{R}:={\mathbb{R}}^1$. For $a_1, a_2 \in \mathbb{R}$ with $a_1<a_2$, let $[a_1,a_2]:=\{t \mid a_1 \leq t \leq a_2\}$.
Let $a<s<b$ be real numbers.

Hereafter, the boundary of a topological manifold $X$ is denoted by $\partial X$. For example, $\partial D^m=S^{m-1}$. 

Let $a<s<b$ be real numbers. Hereafter, let $f_{a,b,s}:M_{a,b,s} \rightarrow \mathbb{R}$ denote a Morse function on an $m$-dimensional compact and connected manifold $M_{a,b,s}$ ($m>1$) whose image is $[a,b] \subset \mathbb{R}$, whose unique critical value is $s$ and the boundary of the manifold of the domain is the disjoint union of the preimage ${f_{a,b,s}}^{-1}(a)$ of $a$ and the preimage ${f_{a,b,s}}^{-1}(b)$ of $b$. On the preimage ${f_{a,b,s}}^{-1}(s-\epsilon) \subset {f_{a,b,s}}^{-1}([a,s-\epsilon])$, handles are attached disjointly along $\partial D^k \times D^{m-k}$ to obtain $M_{a,b,s}$ where $\epsilon>0$ is a sufficiently small number.
 
Another important theorem is Ehresmann's fibration theorem (of the relative version). In several scenes, we suitably find product bundles with fibers diffeomorphic to $D^{m-1}$ in the $m$-dimensional manifolds of the domains over closed intervals in the images of our smooth functions whose projections are given by the restrictions of the original smooth functions and which contain no critical point of the original functions. See \cite{ehresmann} for our original study and see \cite{saeki3} for explicit usage of the relative version in determining types of preimages around critical values of Morse functions (on compact surfaces).

We also discuss such a trivial smooth bundle in the preprint \cite{kitazawa7}, where we do not need to understand the arguments related to the preprint well.

Related to Ehresmann' theorem, ${f_{a,b,s}}^{-1}([a,s-\epsilon])$ is regarded as the total space of a smooth bundle over $[a,s-\epsilon]$ whose fiber is diffeomorphic to ${f_{a,b,s}}^{-1}(a)$. The projection is realized by the original Morse function of course. For ${f_{a,b,s}}^{-1}([s+\epsilon,b])$, we can discuss similarly.

For such a Morse function on an $m$-dimensional compact and connected manifold whose boundary is not empty, we can define its Reeb (di)graph naturally. As vertices, points corresponding to connected components of the preimages of local extrema are added and the degrees are always $1$ for these new vertices.

Hereafter, for a smooth function $c:X \rightarrow \mathbb{R}$ whose Reeb digraph can be defined, its Reeb digraph is denoted by $W_c$. 
Let $q_c:X \rightarrow W_c$ denote the quotient map. We can define a continuous function $\bar{c}:W_c \rightarrow \mathbb{R}$ with $c=\bar{c} \circ q_c$ uniquely.
We can also observe the following in our situation.

\begin{Prop}
\label{prop:1}
We choose an arbitrary edge $e_a$ {\rm (}$e_b${\rm )} of the Reeb digraph $W_{f_{a,b,s}}$ mapped onto $[a,s]$ {\rm (}$resp. [s,b]${\rm )}.
We can find a product bundle $B_{a,b,s}$ over $[a,b]$ with its fiber diffeomorphic to $D^{m-1}$ as before smoothly embedded in the the $m$-dimensional manifold $M_{a,b,s}$ which contains no critical point of the original function $f_{a,b,s}$ and which is, by the quotient map, mapped onto the closure $e_{a,b}$ of the union $e_a \bigcup e_{a \mapsto b}$ {\rm (}resp. $e_{b \mapsto a} \bigcup e_b${\rm )} of any pair of distinct two edges of the Reeb digraph $W_{f_{a,b,s}}$ mapped onto the image $[a,b]$ by the function $\bar{f_{a,b,s}}$. Note that the closure of the union of the edges is considered in the Reeb digraph $W_{f_{a,b,s}}$.
\end{Prop}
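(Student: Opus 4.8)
The plan is to produce $B_{a,b,s}$ by choosing a suitable regular point $p$ of $f_{a,b,s}$ on the connected component of ${f_{a,b,s}}^{-1}(s)$ corresponding to the vertex where $e_a$ ends, taking a small $(m-1)$-disk through $p$ inside that level set, and then sweeping this disk down to the level $a$ and up to the level $b$ along a gradient-like flow. I treat the statement for $e_a$; the one for $e_b$ is the mirror argument interchanging $a$ and $b$. Write $q:=q_{f_{a,b,s}}$ and fix a gradient-like vector field $V$ for $f_{a,b,s}$: the standard gradient in Morse charts near the critical points, with $V(f_{a,b,s})>0$ elsewhere, pointing inward along ${f_{a,b,s}}^{-1}(a)$ and outward along ${f_{a,b,s}}^{-1}(b)$. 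After rescaling $V$ by $1/V(f_{a,b,s})$ off the critical points, its flow $\Psi$ raises $f_{a,b,s}$ at unit rate; as $s$ is the only critical value, the $\Psi$-orbit of any regular point whose forward and backward orbits avoid the critical points is defined over all of $[a,b]$ --- this is the relative version of Ehresmann's theorem already used above to trivialize ${f_{a,b,s}}^{-1}([a,s-\epsilon])$ and ${f_{a,b,s}}^{-1}([s+\epsilon,b])$.

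Let $v_s$ be the vertex at which $e_a$ ends and $C:=q^{-1}(v_s)$, a connected component of ${f_{a,b,s}}^{-1}(s)$; as $v_s$ is a vertex, $C$ contains a critical point. Since $e_a$ is mapped onto $[a,s]$, the vertex $v_s$ is not a local-minimum vertex, and it is not a local-maximum vertex either (otherwise $C$ would be a single index-$m$ point and the component of $M_{a,b,s}$ through it would map onto $[a,s]\subsetneq[a,b]$, against connectedness); hence $C$ has more than one point, so no isolated point, so no critical point of index $0$ or $m$. Thus every critical point of $C$ has index in $\{1,\dots,m-1\}$ and the regular points of $C$ form a nonempty smooth $(m-1)$-manifold. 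Fix a small $\delta>0$, let $F_a$ be the component of ${f_{a,b,s}}^{-1}(s-\delta)$ inside $q^{-1}(e_a\setminus v_s)$ (a closed $(m-1)$-manifold), and let $Z_a\subset F_a$ be the union, over the critical points $p_i\in C$, of the set $F_a\cap A_i$, where $A_i$ consists of the points whose upward $\Psi$-orbit converges to $p_i$. Each $F_a\cap A_i$ is a smoothly embedded sphere whose dimension is one less than the index of $p_i$, hence at most $m-2<m-1=\dim F_a$; so $Z_a$ is closed and $F_a\setminus Z_a\ne\emptyset$. Flowing $F_a\setminus Z_a$ up to the level $s$ embeds it onto an open set $U_a$ of regular points of $C$: such an orbit reaches level $s$ without first meeting a critical point, and it stays in the single component $q^{-1}(e_a\setminus v_s)$ of ${f_{a,b,s}}^{-1}([a,s))$ until it does, where continuity of $q$ forces it to land in $q^{-1}(v_s)=C$. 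Choose $p\in U_a$, let $e_{a\mapsto b}$ be the edge at $v_s$ entered by the upward orbit of $p$, which is defined up to the level $b$ since ${f_{a,b,s}}^{-1}((s,b])$ has no critical point, and take a small $(m-1)$-disk $D\subset U_a$ centred at $p$.

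Now let $B_{a,b,s}$ be the $\Psi$-sweep of $D$ over $[a,b]$, that is, the union over $t\in[a,b]$ of the point where the orbit through each point of $D$ meets ${f_{a,b,s}}^{-1}(t)$; this is well defined because every orbit through a regular point at level $s$ reaches both level $a$ and level $b$ without meeting a critical point. Since $D$ lies in the regular locus and each such orbit crosses ${f_{a,b,s}}^{-1}(s)\supset D$ exactly once, the sweeping map $D\times[a,b]\to M_{a,b,s}$ is a smooth injective immersion from a compact manifold, hence an embedding, under which $f_{a,b,s}$ becomes the projection to $[a,b]$; so $B_{a,b,s}$ is a smoothly embedded product bundle over $[a,b]$ with fibre $D^{m-1}$ as in the statement, and it contains no critical point because its level-$s$ slice $D$ lies in $U_a$ and every other level is regular. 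For the Reeb-digraph image: for $t\in[a,s)$ the slice $B_{a,b,s}\cap{f_{a,b,s}}^{-1}(t)$ is connected and, by the smallness of $D$, contained in $q^{-1}(e_a\setminus v_s)$, hence in one component of ${f_{a,b,s}}^{-1}(t)$, so $q$ sends it to the single point of $e_a$ over $t$; letting $t$ run over $[a,s]$ these cover the closed edge $\overline{e_a}$, the slices over $(s,b]$ cover $\overline{e_{a\mapsto b}}$ in the same way, and the middle slice $D\subset C$ goes to $v_s$. Hence $q(B_{a,b,s})=\overline{e_a}\cup\overline{e_{a\mapsto b}}=\overline{e_a\cup e_{a\mapsto b}}=e_{a,b}$.

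The delicate point --- the one addressed by choosing $p$ inside $U_a$ rather than at an arbitrary regular point of $C$, together with the smallness of $D$ --- is to guarantee that the swept disk does not split across the critical level $s$: that each of its slices over $[a,s)$ stays in the one edge-component $q^{-1}(e_a\setminus v_s)$, and likewise over $(s,b]$. Passing first to the fibre $F_a$, deleting the traces $Z_a$ of the unstable manifolds of the critical points of $C$, and taking $U_a$ to be the upward flow of $F_a\setminus Z_a$ is precisely what secures this; the dimension count $\dim Z_a\le m-2$ keeps $U_a$ nonempty, and the flow-box (relative Ehresmann) structure of $\Psi$ along the compact regular orbit through $p$ makes a small enough $D$ behave as claimed. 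Carrying out these genericity and smallness verifications, and checking that the two half-sweeps fit together smoothly across the level $s$, is where the remaining work lies.
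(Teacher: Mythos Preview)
The paper does not actually prove Proposition~\ref{prop:1}; immediately after the statement it only says ``This argument has been also used in \cite{kitazawa6}. See also the preprint \cite{kitazawa7}.'' So there is no in-paper argument to compare against, and what you have written is precisely the standard gradient-flow construction that those references are pointing to: choose a regular point on the singular level whose descending orbit lands in the prescribed lower component, take a small transverse $(m-1)$-disk through it, and sweep by the normalised flow. Your execution of this is essentially correct, including the key step of removing the stable-manifold traces $Z_a$ from $F_a$ (dimension count $\le m-2$) to guarantee that a suitable $p$ exists.

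Two small points. First, your sentence ruling out the local-maximum case via connectedness of $M_{a,b,s}$ does not quite work as written: connectedness alone does not forbid an index-$m$ point at level $s$. The clean statement is that if $C$ were a single index-$m$ point then $v_s$ would have no edge above it, so there is no $e_{a\mapsto b}$ to speak of and the proposition is vacuous in that case; conversely, once $C$ contains a critical point of index in $\{1,\dots,m-1\}$ the regular locus is nonempty and your argument runs.

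Second, about the phrase ``any pair'' in the statement: your reading --- that for a chosen $e_a$ one produces \emph{some} adjacent $e_{a\mapsto b}$ together with the bundle --- is the correct one, and it is also how the paper uses the result (note the phrase ``such product bundles \emph{as possible} for pairs of edges'' in the proof of Proposition~\ref{prop:2}). The universal reading (``for every pair $(e_a,e_{a\mapsto b})$'') is in fact false: for instance, in dimension $4$ a connected singular fibre carrying one index-$1$ and one index-$3$ critical point can have two edges below and two above with one of the four pairs not realised by any component of $C^{\mathrm{reg}}$, hence by no product bundle. So your choice to let the upward orbit of $p$ \emph{determine} $e_{a\mapsto b}$ is not a shortcut but the right formulation.
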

This argument has been also used in \cite{kitazawa6}. See also the preprint \cite{kitazawa7}.
\begin{Rem}
\label{rem:1}
A variant of Proposition \ref{prop:1} is also shown in the case $f_{a,b,s}$ is a function with no critical point. In this case, the preimage ${f_{a,b,s}}^{-1}([a,b])$ is regarded as the total space of a trivial smooth bundle over $[a,b]$ whose fiber is diffeomorphic to both ${f_{a,b,s}}^{-1}(a)$ and ${f_{a,b,s}}^{-1}(b)$ and diffeomorphic to an ($m-1$)-dimensional closed and connected manifold. The projection is given by the function $f_{a,b,s}$ of course. The Reeb digraph is a graph with exactly one edge with two vertices.
\end{Rem}

\begin{Prop}
\label{prop:2}
In Definition \ref{def:1}, let $m>2$ and let $f:M \rightarrow \mathbb{R}$ denote the IN-M-R function.
Choose the Reeb digraph $W_f$ embedded into $G$. We remove the interior of a small regular neighborhood of each vertex of degree $1$ of the Reeb digraph and this new digraph with the induced orientation is denoted by $W_{f,G}$. 
We can find an $m$-dimensional smooth compact and connected manifold $B_{f,G} \subset M$ in the $m$-dimensional manifold $M$ of the domain which contains no critical point of the original function $f$, which is, by the quotient map $q_f$, mapped onto the refined digraph $W_{f,G}$, and collapses to a digraph homeomorphic to this naturally, and which enjoys the following. 
\begin{enumerate}
\item \label{prop:2.1}
The restriction of $f$ to $B_{f,G} \subset M$ gives the structure of a trivial bundle over $I_{A_{f,G},j}$ whose fiber is diffeomorphic to the disjoint union of finitely many copies of $D^{m-1}$ for any open interval $I_{A_{f,G},j}$ obtained as a connected component of $I_{f,G}-N(A_{f,G})$ we can define in the following way.
\begin{itemize}
\item We define $I_{f,G}:=f(B_{f,G})$ as the image of the function and it is diffeomorphic to a closed interval.
\item A subset $A_{f,G}$ in the interior of $I_{f,G}$ is defined as the image of the set of all vertices whose degrees are at least $3$ by the function $\bar{f}:W_f \rightarrow \mathbb{R}$ with $f=\bar{f} \circ q_f$. The set $N(A_{f,G}) \subset \mathbb{R}$ is defined as a suitable sufficiently small regular neighborhood of $A_{f,G}$.
\item The number of connected components of the preimage of $f^{-1}(I_{A_{f,G},j})$ and that of $B_{f,G} \bigcap f^{-1}(I_{A_{f,G},j})$ agree for any interval $I_{A_{f,G},j}${\rm :} the bundle $B_{f,G} \bigcap f^{-1}(I_{A_{f,G},j})$ is regarded as a subbundle of the bundle $f^{-1}(I_{A_{f,G},j})$, whose projection is given by the restriction of the function $f$ to $f^{-1}(I_{A_{f,G},j})$.
\end{itemize}
\item \label{prop:2.2}
 This is related to local structures of the restriction of $f$ to $B_{f,G} \subset M$ {\rm (}around $A_{f,G}${\rm )}.
\begin{enumerate}
\item \label{prop:2.2.1}
Each connected component of the preimage $f^{-1}(p)$ {\rm (}$p \in N(A_{f,G})${\rm )} is a disjoint union of finitely many copies of the disk $D^{m-1}$ or represented as a bouquet of copies of the disk $D^{m-1}$. More precisely, 
this is a non-manifold space only if $p$ is a boundary point of $N(A_{f,G})$.

In the case of a bouquet, which is not a manifold, this is obtained as follows.
\begin{itemize}
\item We choose two copies of the disk, identify them by choosing one point in the boundary of each disk, in the case of a bouquet of exactly two disks.
\item In the case of a bouquet of at least three disks, we choose a point which is not in the interior and which is not a so-called non-manifold point for the existing bouquet and another point in the boundary of another new disk, identify them, and we do this one after another, inductively.
\end{itemize}  
\item \label{prop:2.2.2}
Let $I_{{\rm o},f,G}$ denote a closed interval in the interior of $I_{f,G}$ such that the complementary set $I_{f,G}-I_{{\rm o},f,G}$ is the disjoint union of two sufficiently small closed intervals.
The restriction of $f$ to the intersection of the boundary $\partial B_{f,G} \subset M$ and the preimage $f^{-1}(I_{{\rm o},f,G})$ is a Morse function such that preimages of single points containing no critical point are disjoint unions of copies of $S^{m-2}$.
Furthermore, suppose that $p$ and $q$ are two boundary points of a connected component of $N(A_{f,G}) \subset \mathbb{R}$ with $p<q$ and that for a connected component $W_{f,[p,q],j}$ of the preimage ${\bar{f}}^{-1}([p,q])$ having exactly one vertex $v_{[p,q],j}$, the number of edges entering the vertex and that of edges departing from the vertex are $N_p$ and $N_q$, respectively. In this case, the preimage $f^{-1}(p)$ contains exactly $N_q-1$ critical points and the preimage $f^{-1}(q)$ contains exactly $N_p-1$ critical points of the Morse function.    
All critical points of the function appear in this way. 
\item \label{prop:2.2.3}
The intersection of each connected component of $f^{-1}(p)$ {\rm (}$p \in I_{{\rm o},f,G}${\rm )} and $M-B_{f,G}$ is also connected.
\end{enumerate}
The class of Morse functions like the restriction of $f$ to the intersection of the boundary $\partial B_{f,G} \subset M$ and the preimage $f^{-1}(I_{{\rm o},f,G})$ is discussed in \cite{michalak1, saeki4} and
see also \cite{saekisuzuoka} {\rm (}\cite[Figure 4]{saekisuzuoka}{\rm )} and \cite{kitazawa2, kitazawa3} for example.
The restriction of $f$ to $f^{-1}(I_{{\rm o},f,G})$ is also regarded as a so-called {\it non-singular extension} of this Morse function and see also \cite{curley, iwakura} for example.
\item \label{prop:2.3}
In the situation of Theorem \ref{thm:2}, the $m$-dimensional smooth submanifold in $M$ is also diffeomorphic to $D^m$ after the corner is smoothed.
\end{enumerate}
\end{Prop}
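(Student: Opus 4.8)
The plan is to realise $B_{f,G}$ as a \emph{thickened spine} of the refined digraph $W_{f,G}$: a compact neighbourhood in $M$ of a finite graph $\Gamma\subset M$ that is a spine of $W_{f,G}$ and misses the critical set of $f$, assembled from explicit local models over the edges of $W_{f,G}$ and over the intervals making up $N(A_{f,G})$ and designed to give exactly the structure in (\ref{prop:2.1})--(\ref{prop:2.3}); FIGUREs \ref{fig:1}--\ref{fig:5} record these local models. First I would construct $\Gamma$. Over an open edge of $W_{f,G}$ with no vertex in its interior the preimage is the total space of a trivial bundle with closed $(m-1)$-dimensional fibre by the relative version of Ehresmann's fibration theorem (cf.\ Proposition \ref{prop:1} and Remark \ref{rem:1}); take a constant section there. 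Since $m-1\ge 2$ these arcs can be chosen pairwise disjoint, disjoint from the finitely many critical points of $f$, and, $\bar f$ being monotone along each edge, monotone for $f$. At a degree-$2$ vertex, carrying a single critical point of $f$, join the two adjacent arcs through $f^{-1}$ of a short interval by an embedded monotone arc missing that point, possible because a connected $(m-1)$-manifold minus finitely many points stays connected. At a degree-$\ge 3$ vertex $v$, with $K_v=q_f^{-1}(v)$ the corresponding singular fibre component, join the adjacent arcs to a small tree in the manifold part of $K_v$ by monotone arcs missing the critical set, using connectedness of $K_v$ and of the cobordism over a short interval around $\bar f(v)$ with its interior critical points deleted. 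This produces $\Gamma$ with $q_f|_\Gamma$ collapsing $\Gamma$ onto $W_{f,G}$ naturally.

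Next I would take $B_{f,G}$ to be a closed regular-neighbourhood-type thickening of $\Gamma$ built from the local models. Over edges and over the short intervals at degree-$2$ vertices it is a disjoint union of copies of $D^{m-1}\times I$ coming from the product structure above, the degree-$2$ critical point of $f$ lying outside $B_{f,G}$ so that $f|_{B_{f,G}}$ is a submersion there. Over a component $[p,q]$ of $N(A_{f,G})$ surrounding the value $\bar f(v)$ of a degree-$\ge 3$ vertex $v$, write $N_p$ and $N_q$ for the numbers of edges of $W_{f,G}$ entering and leaving $v$; the model carries the $N_p$ disk-tubes arriving over $p$, through ${\rm deg}_{W_f}(v)-1=N_p+N_q-1$ disjoint disk-tubes over the interior of $[p,q]$, to the $N_q$ disk-tubes departing over $q$, branching the tubes near $p$ and merging them near $q$; the handle/critical-point correspondence is applied here to $f|_{B_{f,G}}$ and to $f|_{\partial B_{f,G}}$. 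The pieces are glued along common $D^{m-1}\times\{\mathrm{pt}\}$-faces. By construction $B_{f,G}$ is a compact $m$-manifold with corners containing no critical point of $f$, $q_f$ maps it onto $W_{f,G}$, and, all fibres of $q_f|_{B_{f,G}}$ being disks, $q_f$ realises the asserted natural collapse.

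Then I would verify the numbered assertions. For (\ref{prop:2.1}): over a component $I_{A_{f,G},j}$ of $I_{f,G}-N(A_{f,G})$ every vertex of $W_{f,G}$ met has degree $2$, so $f|_{B_{f,G}}$ restricts to a proper submersion onto $I_{A_{f,G},j}$ with a constant number of disk fibres, and Ehresmann gives a bundle, trivial over an interval; the count of components agrees with that of $f^{-1}(I_{A_{f,G},j})$ because over a regular value the components of $f^{-1}(\cdot)$ correspond to the edges of $W_{f,G}$ lying over $I_{A_{f,G},j}$, each meeting $B_{f,G}$ in one disk. For (\ref{prop:2.2.1}): in the local model over $N(A_{f,G})$ each component of $f^{-1}(p)\cap B_{f,G}$ is a disjoint union of copies of $D^{m-1}$, except over the endpoints of the components of $N(A_{f,G})$, where the branchings and mergings produce bouquets of disks, wedged one disk at a time exactly as stated. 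For (\ref{prop:2.2.2}): over a regular interval $\partial B_{f,G}$ is a disjoint union of copies of $S^{m-2}\times I$, so $f|_{\partial B_{f,G}}$ is a submersion there with $S^{m-2}$ fibres, and its critical points occur only where the boundary spheres branch or merge, that is at the endpoints $p<q$ of a component of $N(A_{f,G})$; counting the branchings near $p$ and mergings near $q$ needed to pass from $N_p$ to $N_q$ copies of $S^{m-2}$ through $N_p+N_q-1$ of them yields $N_q-1$ critical points in $f^{-1}(p)$ and $N_p-1$ in $f^{-1}(q)$, of total ${\rm deg}_{W_f}(v)-2$, all critical points arising this way, and $f|_{f^{-1}(I_{{\rm o},f,G})}$ is the indicated non-singular extension. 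For (\ref{prop:2.2.3}): for $p\in I_{{\rm o},f,G}$ a component $C$ of $f^{-1}(p)$ is a connected $(m-1)$-manifold (or a bouquet of such at finitely many values), $C\cap B_{f,G}$ lies in a disk in $C$, and $C-B_{f,G}$ is connected because $m-1\ge 2$. For (\ref{prop:2.3}): in the situation of Theorem \ref{thm:2} the digraph $W_{f,G}$ has first Betti number $0$, so $B_{f,G}$, which collapses onto it, is built from a single $D^{m-1}\times I$ by attaching further copies of $D^{m-1}\times I$ along $D^{m-1}\times\{\mathrm{pt}\}$-faces and by the degree-$\ge 3$ branch/merge models, operations each preserving the diffeomorphism type of $D^m$; hence $B_{f,G}$ is diffeomorphic to $D^m$ once its corners are smoothed.

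The hard part will be designing the local model at a degree-$\ge 3$ vertex so that it does everything at once: keeps $B_{f,G}$ off the critical set of $f$ while $q_f$ still maps it onto all of $W_{f,G}$; makes $f|_{B_{f,G}}$ have disjoint-disk fibres away from $\partial N(A_{f,G})$; makes $f|_{\partial B_{f,G}}$ Morse with exactly $N_q-1$ critical points over the lower endpoint and $N_p-1$ over the upper endpoint of the relevant interval; and, when $f$ is $G$-simple, keeps the whole $B_{f,G}$ diffeomorphic to $D^m$. Deciding how to split the ${\rm deg}_{W_f}(v)-2$ branchings and mergings between the two ends of the interval so that the count in (\ref{prop:2.2.2}) comes out right, while retaining control of the topology of $B_{f,G}$, is the delicate point, and it is precisely what the accompanying figures serve to make explicit.
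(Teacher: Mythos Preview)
Your approach is essentially the paper's: both build $B_{f,G}$ by assembling disk--tube product bundles over the edges of $W_{f,G}$---the paper via repeated application of Proposition~\ref{prop:1} and Remark~\ref{rem:1} to pairs of edges through each vertex, you via first laying down an embedded spine $\Gamma$ and then thickening---invoking $m>2$ to keep the tubes disjoint, and then deforming and gluing the local pieces. Your explicit local model at a degree-$\ge 3$ vertex ($N_p$ tubes branching to $N_p+N_q-1$ over the interior and merging to $N_q$) is precisely what the paper encodes in FIGURE~\ref{fig:1} and in its remark that the critical points of $f|_{\partial B_{f,G}}$ correspond to $1$-handles and $(m-1)$-handles; the paper leaves this count to the figure while you spell it out. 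The intermediate spine $\Gamma$ is a mild addition on your side that the paper does not introduce, but it leads to the same object.

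Two small points. In your check of (\ref{prop:2.2.3}), the claim that $C\cap B_{f,G}$ ``lies in a disk in $C$'' is neither arranged by the construction (there can be several tubes in one component $C$ over $N(A_{f,G})$) nor needed: what you actually use is that removing the interiors of finitely many disjoint, or boundary-wedged, $(m-1)$-disks from a connected closed $(m-1)$-manifold with $m-1\ge 2$ leaves a connected set. For (\ref{prop:2.3}), the hypothesis ``$W_{f,G}$ has first Betti number $0$'' that you invoke is not part of Theorem~\ref{thm:2} as stated, so that sentence does not justify itself; it is, however, exactly the hypothesis of Theorem~\ref{thm:3}, which is where the paper actually applies (\ref{prop:2.3}), and under that hypothesis your argument for $B_{f,G}\cong D^m$ is correct.
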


\begin{proof}
The main ingredient in our proof is local construction around each connected component of $f^{-1}(N(A_{f,G}))$
and around $f^{-1}(I_{f,G}-N(A_{f,G}))$.

We apply Proposition \ref{prop:1} with Remark \ref{rem:1} for $f^{-1}(N(A_{f,G}))$. We also obtain such product bundles as possible for pairs of edges as presented in Proposition \ref{prop:1} (Remark \ref{rem:1}). Due to the condition $m>2$ on the dimension of the manifold, we can choose them disjointly.
Around $f^{-1}(I_{f,G}-N(A_{f,G}))$, we can have a situation essentially same as that of Remark \ref{rem:1}. We can have a product bundle as presented in Proposition \ref{prop:1} with Remark \ref{rem:1}.

We deform and glue the local product bundles suitably to have our desired manifold $B_{f,G}$. We also remove some of the product bundles we do not need. We can have a situation such that each critical point of the function of the restriction of $f$ to the intersection of the boundary $\partial B_{f,G} \subset M$ and the preimage $f^{-1}(I_{{\rm o},f,G})$ corresponds to a $1$-handle, which is used to connect two connected components of $f^{-1}(a)$, or an ($m-1$)-handle, which is used to decompose a single connected component into two connected summands, and have connected components of $f^{-1}(b)$. Note that here we respect the local situation as in Proposition \ref{prop:2} and abuse the notation of Proposition \ref{prop:2}.  

\begin{figure}
	\includegraphics[width=80mm,height=80mm]{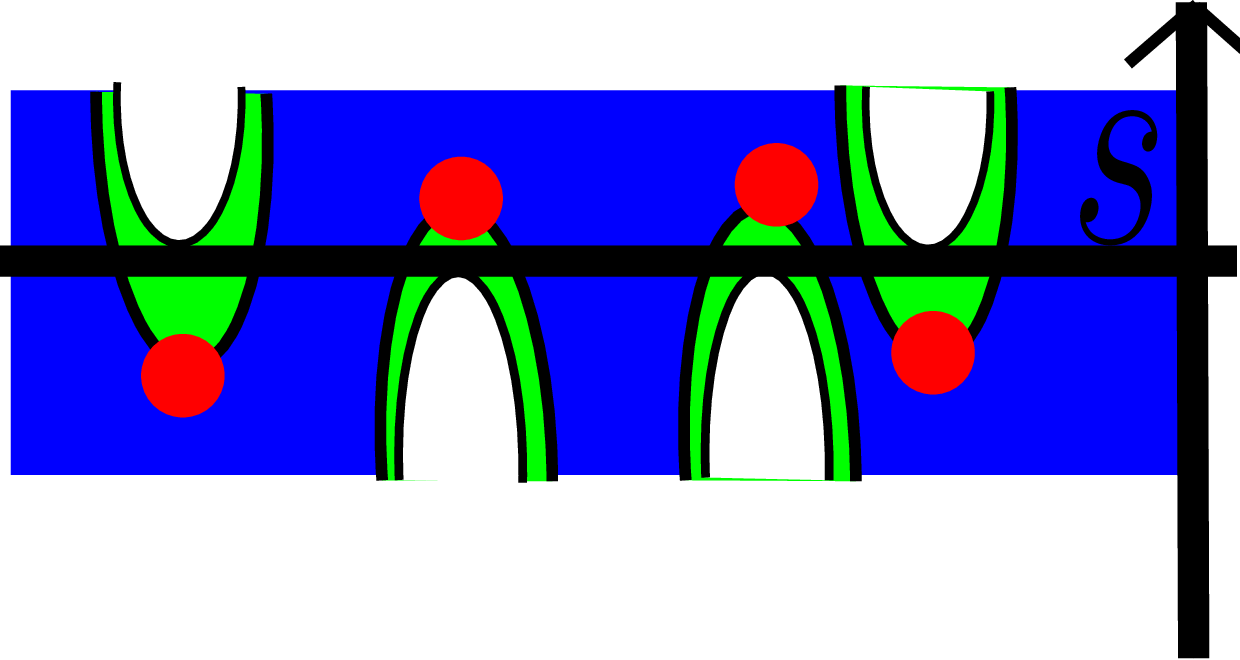}
	\caption{A part of the manifold $M$ mapped to a vertex $v_{[p,q,],j}$ by the original quotient map $q_f$ and (a neighborhood of) $[p,q] \in \mathbb{R}$ by the original function $f$ in Proposition \ref{prop:2}.
		The manifold $B_{f,G} \subset M$
		is shown in blue and the (interior of the) manifold $M-B_{f,G}$ is colored in green. Furthermore, red dots show critical points of the Morse function such that preimages of single points containing no critical point are disjoint unions of copies of $S^{m-1}$.}
	\label{fig:1}
\end{figure}
 This completes the proof.
\end{proof}
\begin{Rem}
\label{rem:2}
In Proposition \ref{prop:2}, for a connected subgraph $W_{{\rm S},f}$ of the Reeb digraph $W_f$ with the set of all vertices of degree $1$ of $W_{{\rm S},f}$ being mapped into the set of all vertices of degree $1$ of $G$, each vertex of degree $2$ being mapped to the interior of some edge of $G$, and the orientation induced from $G$ being same as the orientation of the given orientation of the given digraph $W_{{\rm S},f} \subset W_f$, we can discuss similarly to the original case. This is also important in our proof of Theorem \ref{thm:2}. 
\end{Rem}

\begin{Rem}
	\label{rem:3}
We can prove Proposition \ref{prop:2} with Remark \ref{rem:2} in the case $m=2$ in some specific cases. Due to the low dimensional situation, our arguments are restricted. However, it is sufficient to consider the case $m>2$. See also Remark \ref{rem:4}.
\end{Rem}
For deformations of Morse functions and their Reeb graphs, see also \cite{marzantowiczmichalak, michalak1, michalak2} for example.
\begin{proof}[Our proof of Theorems \ref{thm:1} and \ref{thm:2}]
Theorem \ref{thm:1} is shown as in FIGUREs \ref{fig:2} and \ref{fig:3}. We give more precise exposition on this.

For a chosen point in each of the Reeb digraphs, we consider small segments homeomorphic to a closed interval in the Reeb digraphs and contain the chosen points in their interiors. They are seen as segments $e_{a,b}$ as presented in Proposition \ref{prop:1} or Remark \ref{rem:1}.
We encounter the situation of Proposition \ref{prop:1} with Remark \ref{rem:1} explicitly. We respect the notation and the situation and scale the images of these local functions to $[a,b]$.
We choose the product bundles in the manifolds of the domains of the local Morse functions mapped onto $e_{a,b}$ by the quotient map $q_{f_{a,b,s}}$. We remove the interiors of the product bundles. We glue the remaining functions (onto $[a,b]$) preserving the value at each point as we do in \cite{kitazawa6} and the preprint \cite{kitazawa7}. 

We have a new IN-M-R function and its digraph where the two chosen segments are attached respecting the original orientations.

By considering another local smooth function, we can see that we can deform the function to deform the Reeb digraph into a desired one. As a kind of extra remarks, we refer to \cite[Figure 4 (6)]{michalak2} and the inverse operation of the modification of the Reeb digraph there does not work in general where in our present case we can apply the inverse operation. Related more precise arguments are explained in FIGUREs \ref{fig:2} and \ref{fig:3}.
\begin{figure}
	\includegraphics[width=80mm,height=80mm]{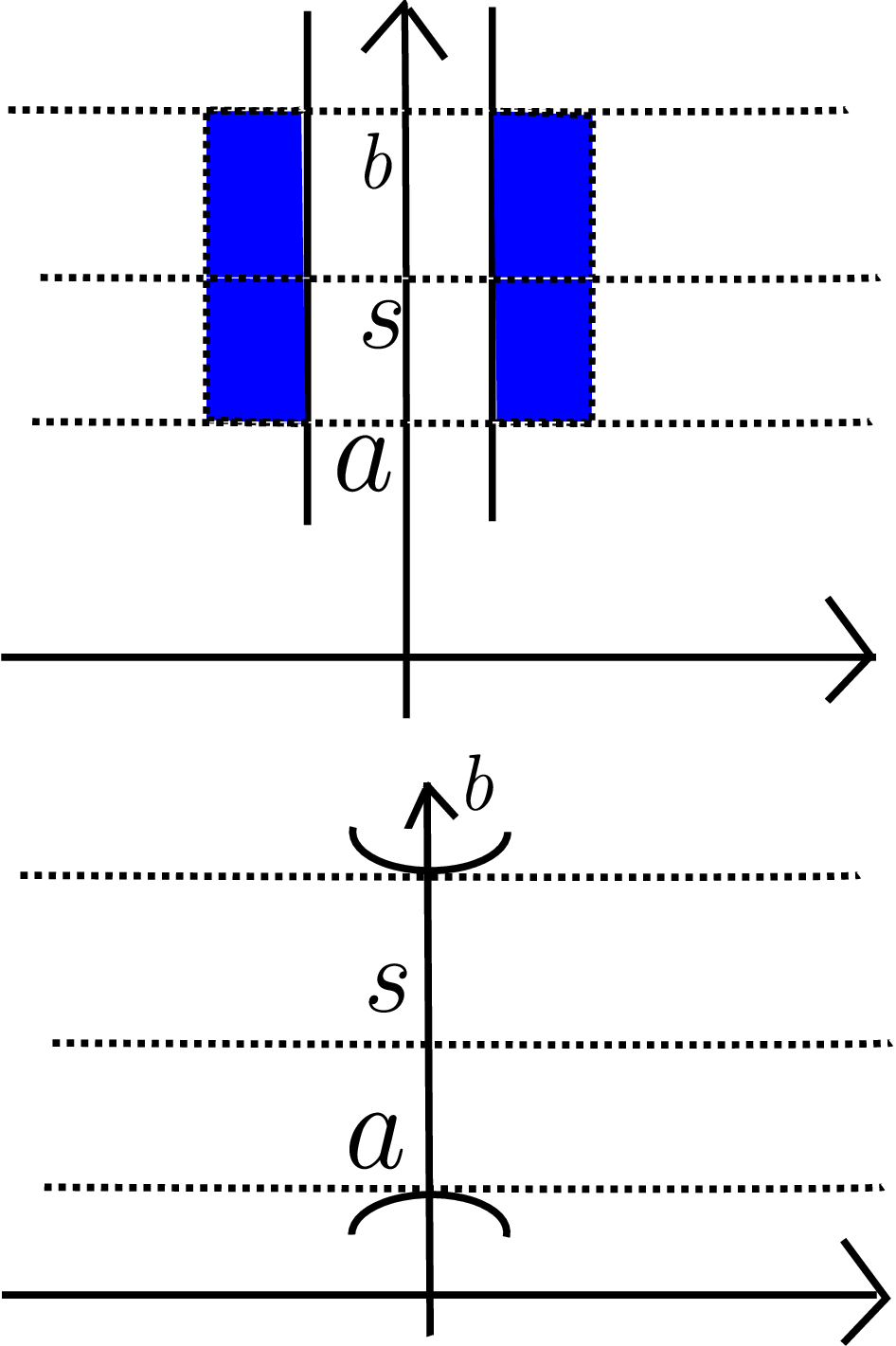}
	\caption{Around each of the two product bundles, the product map of the original function $f_{a,b,s}$ and another suitable function is considered and regarded as a fold map whose singular points are regarded as the higher dimensional versions of critical points of index $0$. The (preimages of the) blue colored regions show the product bundles and we remove. After that, we glue the remaining functions (onto $[a,b]$) preserving the value at each point as we do in \cite{kitazawa6} and the preprint \cite{kitazawa7}. We also consider the smoothing.}
	\label{fig:2}
\end{figure}
\begin{figure}
	\includegraphics[width=80mm,height=80mm]{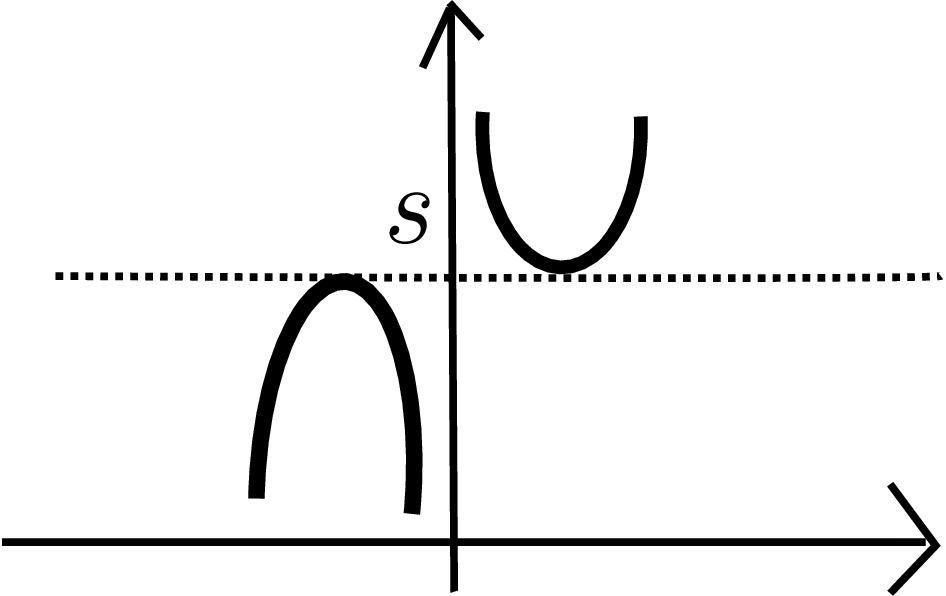}
	\caption{We deform the resulting local fold map of FIGURE \ref{fig:2} to have our desired local Morse function.}
	\label{fig:3}
\end{figure}

The resulting manifold of the domain is a desired connected sum. We can exchange the diffeomorphism used for gluing the local functions preserving the values of the functions from the orientation preserving (reversing) one to another reversing (resp. preserving) one.

This completes the proof of Theorem \ref{thm:1}.

By our definitions and construction, we have Theorem \ref{thm:2}.

We prove Theorem \ref{thm:3}.

We prove Theorem \ref{thm:3} (\ref{thm:3.1}). We encounter the situation of Proposition \ref{prop:2} with Remark \ref{rem:2} explicitly. 
We can apply Proposition \ref{prop:2} for the function $f$.

Several exercises on Morse functions and Reeb graphs appear. First, thanks to higher dimensional versions of \cite{gelbukh1, gelbukh2, michalak1}, we have a $G$-S Morse function $f_{G,S^m}$ such that the preimages of points containing no critical point are disjoint unions of the copy of $S^{m-1}$ and that the Reeb digraph is isomorphic to $G$. 
$1$-handles and ($m-1$)-handles are corresponded to these singular points.
See also the preprints \cite{kitazawa8, kitazawa9} of the author, where we do not assume related knowledge.

For the function $f_{G,S^m}$, we apply Remark \ref{rem:2} by $W_{{\rm S},f_{G,S^m}}:=W_f$ in Theorem \ref{thm:2}. Beforehand, we need to construct $f_{G,S^m}$ suitably to continue our arguments. For this we need to attach handles in such a way that the following hold. 
\begin{itemize}
\item We change the preimage of a single point of an edge of $W_f$ to a manifold diffeomorphic to $S^{m-n}$ and attach $1$-handles and $n-1$ handles suitably respecting Proposition \ref{prop:2}.
\item We attach some handles in the way same as the way where we attach handles for Proposition \ref{prop:2} with $f$ and $W_f$. More precisely, corresponding handles are attached to connected components or unions of connected components same as corresponding connected components in the case of $f$ and $W_f$.  
\end{itemize}

We can do this and we also have a corresponding $G$-S function. As we do in the proof of Theorem \ref{thm:1}, we find an $m$-dimensional smooth compact and connected manifold in the original manifold $M$ and an $m$-dimensional smooth compact and connected manifold in the original manifold $S^m$ mapped onto the same refined digraph $W_{f,G}$. As we do in the proof of Theorem \ref{thm:1}, we can remove these $m$-dimensional submanifolds, diffeomorphic to $D^m$, as presented in Proposition \ref{prop:2} (\ref{prop:2.3}), and we can glue the remaining functions preserving the value at each point. 
  
Related to this, we go back to FIGURE \ref{fig:1}. This shows Proposition \ref{prop:2} with the manifold $B_{f,G} \subset M$
being shown in blue and the (interior of the) manifold $M-B_{f,G}$ being colored in green. 
More precisely, this shows a part mapped to a vertex by the quotient map $q_f$ and mapped to $[a,b] \subset \mathbb{R}$ by $f$.
Furthermore, red dots also show points which are critical points of the resulting function, obtained after we glue the local functions. The index of each critical point of the resulting function is $1$ or $m-1$.

(After a slight deformation,) we can see that the resulting Reeb digraph is, around each vertex of degree $1$ of $W_{f}$, embedded in $G$, as presented in FIGURE \ref{fig:4}.

\begin{figure}
	\includegraphics[width=80mm,height=80mm]{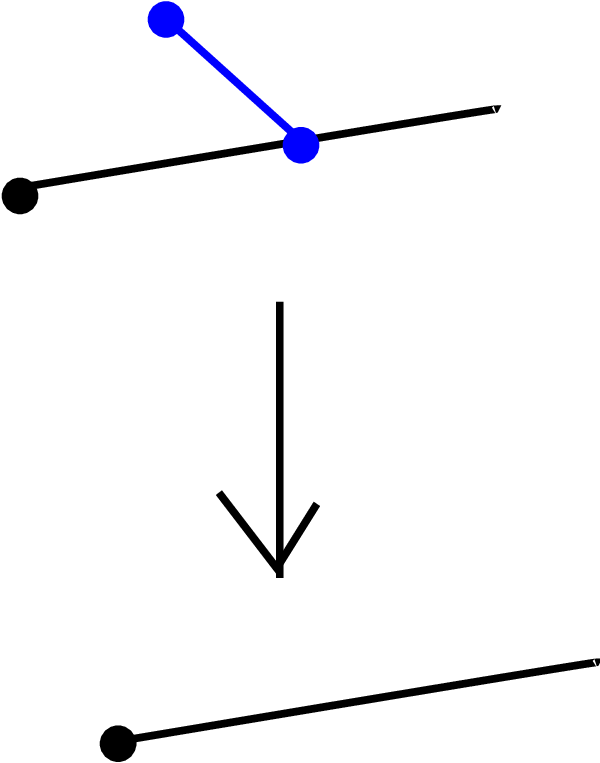}
	\caption{The Reeb digraph $W_f$ around a vertex $v$ of degree $1$ in $W_f$, mapped to a vertex of degree $1$ of $G$. The blue colored edge is originally a part of an edge of the graph $G$ containing the vertex $v \in W_f \subset G$. To each of blue colored two vertices, exactly one critical point of the local Morse function is mapped. Last, we can deform the function with the Reeb digraph by applying a well-known argument of canceling a pair of handles.}
	\label{fig:4}
\end{figure}

For this, more precisely, as we do in the proof of Theorem \ref{thm:1}, we need to consider a local product map as in FIGURE \ref{fig:3} and deform the resulting local fold map as in FIGURE \ref{fig:3}. Check FIGURE \ref{fig:5} with the attached exposition.

\begin{figure}
	\includegraphics[width=80mm,height=80mm]{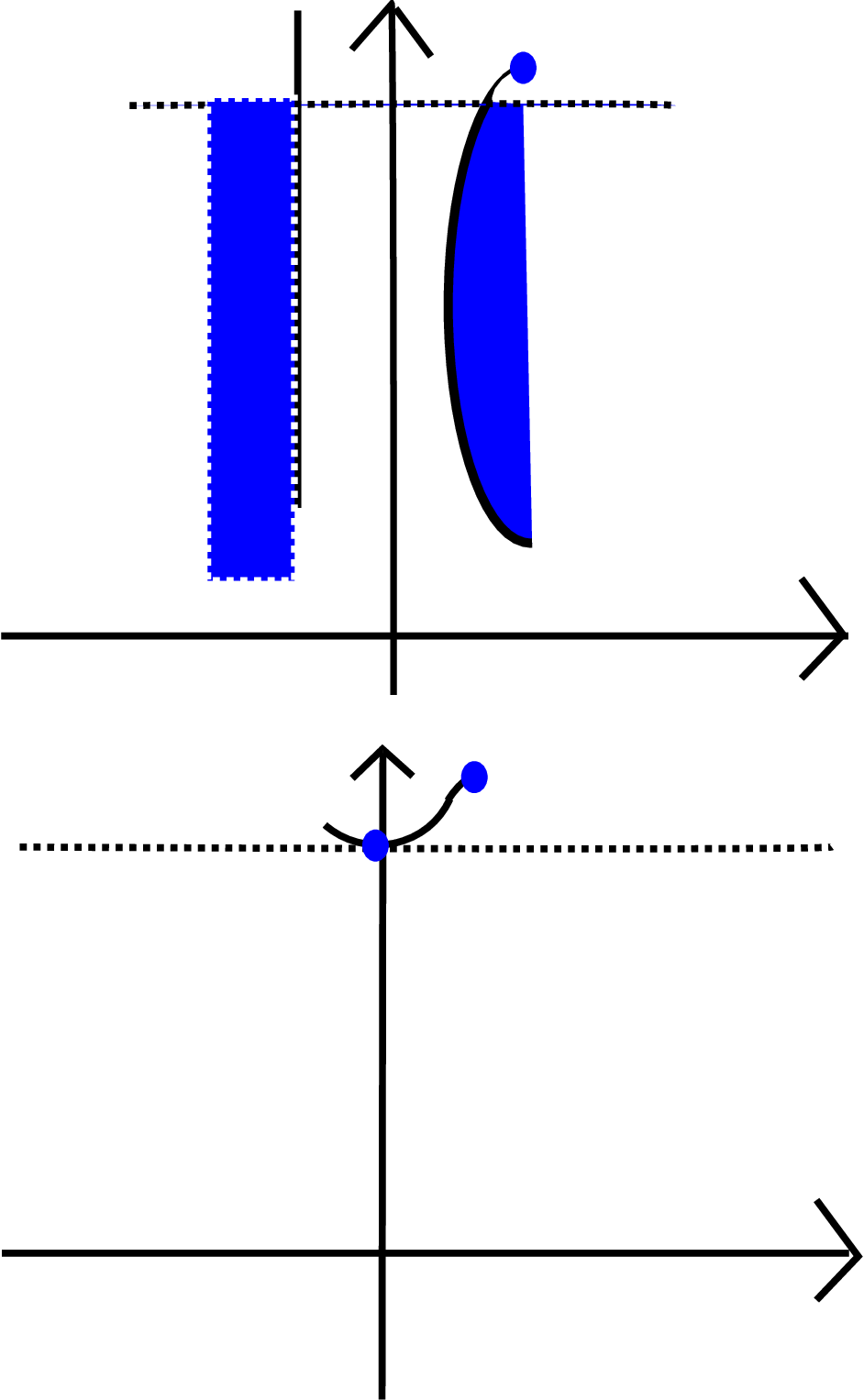}
	\caption{Around each of the two product bundles around the vertex $v$ of degree $1$ of $W_f$, mapped into $G$. The product map of the original local function and another suitable function is considered in the upper figure and regarded as a fold map whose singular points are regarded as the higher dimensional versions of critical points of index $0$. Similar arguments are also in FIGURE \ref{fig:2}. The (preimages of the) blue colored regions in the upper figure show the product bundles and we remove their interiors as we do in FIGURE \ref{fig:2}. The blue dotted points show (the points whose preimages considered for the local fold map contain) critical points of the local Morse function presented in the upper part of FIGURE \ref{fig:4} by the Reeb graph. As presented in FIGURE \ref{fig:3}, the resulting local fold map is deformed to have our desired situation of FIGURE \ref{fig:4} (the lower figure here). The critical points shown in blue are mapped to the blue colored vertices of the graph of (the upper figure of) FIGURE \ref{fig:4}.}
	\label{fig:5}
\end{figure}
FIGURE \ref{fig:4} also shows the final deformation of the function and its Reeb digraph to have a desired function with a desired Reeb digraph by canceling a pair of critical points and a pair of corresponding handles. This is also presented in \cite[Figure 5 (8)]{michalak2}.
For the structure of the resulting Reeb digraph, we also respect Proposition \ref{prop:2} (\ref{prop:2.1}, \ref{prop:2.2}), especially, (\ref{prop:2.2.3}), with Remark \ref{rem:2}.



As Theorem \ref{thm:1}, we can exchange the diffeomorphism for gluing the functions from the orientation preserving (reversing) one to another orientation reversing (resp. preserving) one easily.

This completes our proof of Theorem \ref{thm:3} (\ref{thm:3.1}).

By the condition and the argument above, we can prove Theorem \ref{thm:3} (\ref{thm:3.2}) similarly.

This completes the proof.

\end{proof}

\begin{Rem}
\label{rem:4}
	Related to Remark \ref{rem:3}, for example, in the case $m \geq 2$, if $f$ is $W_f$-S simple and the index of each vertex of $W_f$ is $1$, $2$ or $3$, then we can show Proposition \ref{prop:2} affirmatively. We can also apply Theorem \ref{thm:2} for such $f$.
\end{Rem}
\begin{Rem}
\label{rem:5}
	In Theorem \ref{thm:2}, let us weaken the condition on the embedding of vertices of degree $2$ of the Reeb digraph $W_f \subset G$ and consider an embedding which may map vertices of $W_f$ of degree $2$ into the vertex set of graph $G$. Let ${V_{{G,W_f}=2}} \subset {W_f}_{\geq 2}-{W_f}_{\geq 3}$ denote the set of all vertices of $W_f$ of degree $2$ mapped into the vertex set of $G$. Then the number of the Reeb digraph of the resulting function of $M$ added newly to $G$ is changed to ${\Sigma}_{v \in {W_f}_{\geq 3}} ({\rm deg}_{{W_f}_{\geq 3}}(v)-2)+{\Sigma}_{v \in {W_f}_{\geq 2}-({W_f}_{\geq 3} \sqcup {V_{{G,W_f}=2}})} 1$ from ${\Sigma}_{v \in {W_f}_{\geq 3}} ({\rm deg}_{{W_f}_{\geq 3}}(v)-2)+{\Sigma}_{v \in {W_f}_{\geq 2}-{W_f}_{\geq 3}} 1$ for each of Theorem \ref{thm:3} (\ref{thm:3.1}) and (\ref{thm:3.2}). 
	\end{Rem}

The following is a kind of our remark for our future studies.
\begin{Rem}
\label{rem:6}
	As a future work, we can consider classifications of IN-M-R functions of certain classes via Reeb digraphs, respecting classifications of certain classes of Morse-Bott functions \cite{gelbukh1, gelbukh2, martinezalfaromezasarmientooliveira1, martinezalfaromezasarmientooliveira2, marzantowiczmichalak,michalak1, michalak2}, for example. For this, as a kind of explicit preparation, we need to find additional new methods of construction of IN-M-R functions other than our present new result.

It is also important to find explicit motivations to study functions of certain classes generalizing the classes of Morse-Bott functions and formulate such classes. Morse-Bott functions are fundamental and important in topology and differential topology of manifolds and some symplectic topology such as theory of symplectic toric manifolds and moments maps, composing which with canonical projections gives Morse-Bott functions. Are these new classes important in certain fields of geometry?
	\end{Rem}

\section{Acknowledgment.}
The author would like to thank Antonio Lerario for the information \cite{lerariomeronizuddas}. Thanks to this, the author has been motivated to produce the present study.
\section{Conflict of interest and data.}
The author is a researcher at Osaka Central
Advanced Mathematical Institute (OCAMI researcher). The institute is supported by MEXT Promotion of Distinctive Joint Research Center Program JPMXP0723833165. The author is not employed there, where the author thanks the institute.

During the preparation of (the present version of) the present paper, the author informally joined the project Joint Research Center for Advanced and Fundamental Mathematics-for-Industry 2025a028 
(https://joint.imi.kyushu-u.ac.jp/research-reports/year-2025/) and discussed singularity theory of differentiable maps and related differential topology of manifolds, and related applications to multi-optimization problems, especially, geometric structures of the problems. Related to this project, the author has also discussed our present paper and this has helped the author to improve the quality of the paper. 

The author would like to thank Naoki Hamada for discussions on Reeb graphs, reconstruction of nice smooth functions with Reeb graphs isomorphic to given graphs and applications to optimization problems. 
The author would like to thank Kouki Iwakura for discussions related to non-singular extensions with \cite{iwakura}. The author would also like to thank Takahiro Yamamoto for comments from the viewpoint of singularity theory and application to geometry and ones related to Remark \ref{rem:6}. 
The author would like to thank all members related to the project for important comments again.

Other than the present file, no data are associated.

\end{document}